\documentclass{amsart}

\usepackage{amssymb,amsmath,array,multirow,makecell,blindtext,amsthm,enumitem,mathtools,amscd,tikz-cd,amsrefs,dsfont,hyperref,url,caption,float,placeins}
\usepackage[font=small, labelfont=bf]{caption}
\hypersetup{
    colorlinks=true,
    linkcolor=blue,
    citecolor=black,
    filecolor=magenta,      
    urlcolor=cyan,
}
\usepackage[all]{xy}
\usepackage{graphicx}
\usetikzlibrary{positioning}
\usepackage{caption}

\usepackage{graphicx}
\graphicspath{ {./images/} }

\urlstyle{same}
\theoremstyle{plain}
\newtheorem{theorem}{Theorem}[section]
\newtheorem{corollary}[theorem]{Corollary}
\newtheorem{lemma}[theorem]{Lemma}
\newtheorem{remark}[theorem]{Remark}
\newtheorem{proposition}[theorem]{Proposition}
\newtheorem{definition}[theorem]{Definition}

\newtheorem{example}[theorem]{Example}

\newcommand{\Z}{\mathbb{Z}}
\newcommand{\Q}{\mathbb{Q}}
\newcommand{\R}{\mathbb{R}}

\newcommand{\LCH}{\operatorname{LCH}}

\newcommand{\ord}{\operatorname{ord}}

\newcommand{\NP}{\operatorname{NP}}

\newcommand{\nf}{\normalfont}

\newcommand{\e}{\varepsilon}

\newcolumntype{?}{!{\vrule width 1pt}}

\title[]{Stretching Newton polygons using pure polynomials}


\author{Rylan Gajek-Leonard and Uri Tomer\\ 
}

\address[]{Department of Mathematics, Union College, Schenectady, NY}
\email[Rylan Gajek-Leonard]{gajekler@union.edu}

\address[]{Department of Mathematics, Union College, Schenectady, NY}
\email[Uri Tomer]{tomeru@union.edu}

\subjclass[2020]{Primary 11S05; Secondary 11R09, 11S82}
\keywords{Newton polygons, stable polynomials, dynamical irreducibility}

\begin{document}

\begin{abstract}
The $p$-adic Newton polygon is a visual tool that encodes information about the roots and factorization of a polynomial relative to a prime $p$.  In this article, we investigate how the Newton polygon changes under polynomial composition. If $f$ and $g$ are polynomials with rational (or $p$-adic) coefficients and the Newton polygon of $g$ is pure (has only one segment), we show under some mild conditions that the Newton polygon of $f\circ g$ is the same as that of $f$, but stretched horizontally by $\deg g$. When $f=g$, this implies that all iterates of certain pure polynomials are irreducible, recovering a classical result of Robert Odoni on the irreducibility of iterated Eisenstein polynomials.
\end{abstract}

\maketitle

\section{Introduction.}

Fix a prime $p$ and a polynomial $f\in \Q[x]$ with $f(0)\neq0$. The $p$-adic Newton polygon of $f$ encodes information about the factorization of $f$ over the field $\Q_p$ of $p$-adic numbers and the valuation of its roots over $\bar \Q_p$. This  local data can often be pieced together at several primes to make global conclusions about the irreducibility and Galois properties of $f$ over $\Q$ (e.g., see \cite{Coleman}), making the Newton polygon a useful tool for studying arithmetic properties of $f$. 

In this article, we consider the behavior of the Newton polygon under composition. Specifically, what can be said about the Newton polygon of $f\circ g$ as $g$ varies?  Can one describe families of polynomials $S$ that preserve, in some sense, the key features (segments and slopes) of the Newton polygon of $f$ under composition? The case $S=\{f\}$ lies at the heart of arithmetic dynamics, whose central focus is understanding the behavior of $f$ upon iteration.  When $f$ is \emph{$p^r$-pure}  (i.e., its Newton polygon is a single line segment connecting the points $(0,r)$ and $(\deg f, 0)$) and $S$ consists of $p^r$-pure polynomials, it is known (see \cite[Theorem 3.10]{DS} and \cite{Ali}) that the Newton polygon of $f\circ g$ is also $p^r$-pure, assuming $\deg f>1$. Thus, the overall shape of a pure Newton polygon is preserved under composition by pure polynomials of the same slope. 

Our main result shows that a similar phenomenon holds when the Newton polygon of $f$ has multiple segments. Roughly speaking, as long as the Newton polygon of $f$ does not have any segments with steep negative slopes, composition with a $p^r$-pure polynomial $g$ has the interesting effect of `stretching' the Newton polygon of $f$ horizontally by a factor of $\deg g$, thereby preserving the number of segments and transforming the slopes in a predictable way (see Figure \ref{mainfig}).

\begin{theorem}\label{main} Suppose that the $p$-adic Newton polygon of $f\in \Q_p[x]$ consists of $N$ segments with slopes $s_1< \dots <s_N$. If $g$ is $p^r$-pure and $r+s_i>0$ for all $1\leq i\leq N$ then the Newton polygon of $f\circ g$ has $N$ segments of slopes $\frac{s_1}{\deg g}< \dots <\frac{s_N}{\deg g}$.
\end{theorem}
     
\begin{figure}[h!]
\begin{center}
\hspace{-8cm}
\begin{minipage}{.35\textwidth}
    \begin{tikzpicture}[scale=0.65] 
    \draw[{black}] plot coordinates {(0,4.5) (0,0) (5.7,0)};
     \draw[thick] (0,4) -- (1,1);
     \draw[thick] (1,1) -- (2,.5);
     \draw[thick] (2,.5) -- (2.5,.4);
     \draw[thick] (3.5,.4) -- (4,.5);
     \draw[thick] (4,.5) -- (5,1.3);
      \draw[thick] (5,1.3) -- (5.5,3);
     \node[align=center] at (3,.4) {$\mathbf{\cdots}$};
     \node[align=center] at (0.9,2.7) {\scriptsize$s_1$};
     \node[align=center] at (1.7,1.1) {\scriptsize$s_2$};
      \node[align=center] at (4,1.1) {\scriptsize$s_{N-1}$};
      \node[align=center] at (4.7,2.2) {\scriptsize$s_{N}$};
       \node[align=center] at (3,3) {\scriptsize$\NP_p(f)$};
\fill (0,4) circle (2pt);
\fill (1,1)  circle (2pt);
\fill (2,.5) circle (2pt);
\fill (4,.5)  circle (2pt);
\fill (5,1.3) circle (2pt);
\fill (5.5,3) circle (2pt);
\draw (1, 2pt) -- (1,-2pt)node[anchor=north] {\scriptsize\text{$i_1$}};
\draw (2, 2pt) -- (2,-2pt)node[anchor=north] {\scriptsize\text{$i_2$}};
\draw (5.5, 2pt) -- (5.5,-2pt)node[anchor=north] {\scriptsize\text{$n$}};
    \end{tikzpicture}
    \end{minipage}
    \begin{minipage}{0.\textwidth}
    \begin{tikzpicture}[scale=0.65] 
    \draw[{black}] plot coordinates {(0,4.5) (0,0) (11.2,0)};
     \draw[thick] (0,4) -- (2,1);
     \draw[thick] (2,1) -- (4,.5);
     \draw[thick] (4,.5) -- (5,.4);
     \draw[thick] (7,.4) -- (8,.5);
     \draw[thick] (8,.5) -- (10,1.3);
      \draw[thick] (10,1.3) -- (11,3);
      \draw (2, 2pt) -- (2,-2pt)node[anchor=north] {\scriptsize\text{$di_1$}};
\draw (4, 2pt) -- (4,-2pt)node[anchor=north] {\scriptsize\text{$di_2$}};
\draw (11, 2pt) -- (11,-2pt)node[anchor=north] {\scriptsize\text{$dn$}};
      \node[align=center] at (6,.4) {$\mathbf{\cdots}$};
     \node[align=center] at (1.5,2.7) {\scriptsize$\frac{s_1}{d}$};
     \node[align=center] at (3,1.2) {\scriptsize$\frac{s_2}{d}$};
      \node[align=center] at (8.7,1.3) {\scriptsize$\frac{s_{N-1}}{d}$};
      \node[align=center] at (10,2.2) {\scriptsize$\frac{s_N}{d}$};
       \node[align=center] at (6,3) {\scriptsize$\NP_p(f\circ g)$};
\fill (0,4) circle (2pt);
\fill (2,1)  circle (2pt);
\fill (4,.5)  circle (2pt);
\fill (8,.5) circle (2pt);
\fill (10,1.3) circle (2pt);
\fill (11,3) circle (2pt);
    \end{tikzpicture}
    \end{minipage}
    \end{center}
    \caption{Newton polygons of $f$ and $f\circ g$ as in Theorem \ref{main}, where $d=\deg g$ and $i_j$ denote the $x$-coordinates of the vertices of $\NP_p(f)$.}\label{mainfig}
    \end{figure}
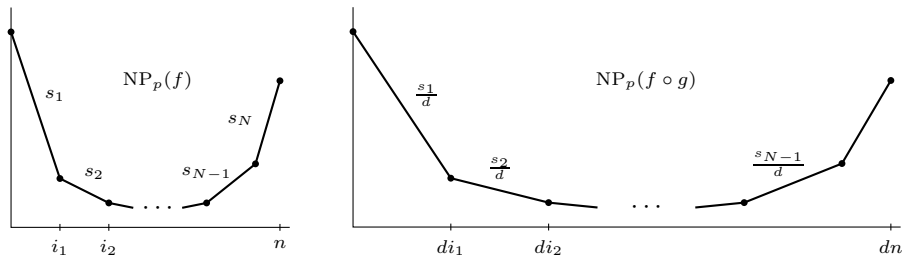
    
 \vspace{-2mm}

\begin{remark}\nf Note that for any fixed $f$, one can always choose $g$ so that the assumptions of Theorem \ref{main} are satisfied. In particular, the conditions are satisfied for any $f$ whose Newton polygon has all positive slopes. 
\end{remark}

Recall the Eisenstein-Dumas criterion \cite{Dumas,Eisenstein}  (see  \cite{Gao} for a modern statement), which says that if $f$ is pure of slope $-\frac{r}{\deg f}$ and $\gcd(r,\deg f)=1$, then $f$ is irreducible over $\Q$. (When $r=1$, this recovers the well-known Eisenstein irreducibility criterion.) Polynomials satisfying this criterion are called \emph{$p^r$-Dumas} polynomials, or $p$-Eisenstein polynomials when $r=1$. 
The family of $p^r$-Dumas polynomials arise naturally in the theory of local fields; for example, they can be used to generate ramified  (totally, in the Eisenstein case) extensions. An immediate consequence of Theorem \ref{main} is the following corollary, which says that $p^r$-Dumas polynomials are \emph{dynamically irreducible} (or \emph{stable}), meaning that all their iterates are also irreducible. Note that this recovers the classical result of Odoni \cite[Lemma 1.3]{Odoni} asserting the dynamical irreducibility of Eisenstein polynomials.

\begin{corollary}\label{ds} If $f$ is $p^r$-Dumas then $f$ is dynamically irreducible.
\end{corollary}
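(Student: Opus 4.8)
The plan is to induct on $n$ to show that the $n$-th iterate $f^{\circ n}$ is itself $p^r$-pure of degree $(\deg f)^n$, and then invoke the Eisenstein--Dumas criterion to deduce irreducibility. Write $d = \deg f$ and $s_1 = -r/d$ for the unique slope of $\NP_p(f)$; since $f$ is $p^r$-Dumas we have $\gcd(r,d)=1$. We may assume $d \geq 2$, as the case $d=1$ is trivial (every iterate is linear, hence irreducible).

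First I would establish the inductive claim: for every $n \geq 1$, the iterate $f^{\circ n}$ is $p^r$-pure of degree $d^n$. The base case $n=1$ is the hypothesis. For the inductive step, write $f^{\circ n} = f \circ f^{\circ (n-1)}$ and apply Theorem \ref{main} with outer polynomial $f$ and inner polynomial $g = f^{\circ(n-1)}$. The inner polynomial is $p^r$-pure by the inductive hypothesis, and the outer polynomial $f$ has the single slope $s_1$ with $|s_1| = r/d < r$ because $d \geq 2$; thus the hypotheses of Theorem \ref{main} are met. The theorem then gives that $\NP_p(f^{\circ n})$ consists of a single segment of slope $s_1/\deg g = (-r/d)/d^{n-1} = -r/d^n$. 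Since $\deg f^{\circ n} = d^n$, this exhibits $f^{\circ n}$ as $p^r$-pure.

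With the claim in hand, it remains to note that $f^{\circ n}$ satisfies the Eisenstein--Dumas hypotheses: it is pure of slope $-r/d^n$, and $\gcd(r,d^n)=1$ since $d^n$ shares its prime factors with $d$ and $\gcd(r,d)=1$. Hence $f^{\circ n}$ is $p^r$-Dumas, and therefore irreducible over $\Q$. As this holds for all $n$, the polynomial $f$ is dynamically irreducible.

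The argument is essentially bookkeeping once Theorem \ref{main} is available, so I do not anticipate a serious obstacle. The one point requiring care is verifying the slope condition $|s_i| < r$ at each stage of the induction: it is cleanest to always apply Theorem \ref{main} with $f$ (slope $-r/d$) as the \emph{outer} polynomial and the iterate as the \emph{inner} $p^r$-pure polynomial, so that the relevant bound is the fixed inequality $r/d < r$, which holds uniformly in $n$ for $d \geq 2$. I should also emphasize that the coprimality hypothesis $\gcd(r,d)=1$ (not merely purity) is exactly what is needed in the final step, since it is what guarantees $\gcd(r,d^n)=1$ and hence the applicability of Eisenstein--Dumas to every iterate.
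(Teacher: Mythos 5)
Your proof is correct and follows essentially the same route as the paper: apply Theorem \ref{main} inductively to show each iterate $f^{\circ n}$ is $p^r$-pure of slope $-r/d^n$, then invoke Eisenstein--Dumas using $\gcd(r,d^n)=1$. Your writeup is somewhat more careful than the paper's (which compresses the induction into one sentence), in particular in fixing $f$ as the outer polynomial so the slope bound $r/d<r$ is uniform, and in separating out the degenerate case $d=1$.
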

\begin{proof} The case $\deg f=1$ is clear, so assume $\deg f>1$. By Theorem \ref{main}, the Newton polygon of $f\circ f$ has a single segment of slope $-r/(\deg f)^2$, and is therefore irreducible by the Eisenstein-Dumas criterion. The result now follows from induction. 
\end{proof}

\begin{remark} \nf Corollary \ref{ds} was first proved in \cite[Corollary 4.1]{DS} using different methods. 
\end{remark}

As a sample application, we show in Corollary \ref{irredcoro} how one can use Theorem \ref{main} to prove that the sequence 
$$
f_n(x)=1+x+\frac{x^2}{2!}+\cdots+ \frac{x^n}{n!}
$$ 
of Taylor polynomials of the exponential function (all of which are irreducible by \cite{Schur,Coleman}) are also irreducible upon composition with all iterates of certain $p^r$-Dumas polynomials.

\subsection{Acknowledgements}
The authors would like to thank John Cullinan, Daniel Gallagher, Jeffrey Hatley, and James Upton.

\subsection{Notation} We write $f^n=f \cdots f$ for the product of $f$ with itself $n$ times, and $f^{\circ n}=f\circ \dots \circ f$ for the $n$th iterate of $f$.

\section{Newton polygons of products and sums.}

Fix a prime $p$ and a polynomial $f=\sum_{i=0}^n a_ix^i\in \Q_p[x]$. The \emph{$p$-adic Newton polygon of $f$}, denoted $\NP_p(f)$, is the lower convex hull in $\R^2$ of the points $(i,\ord_p a_i)$, $0\leq i\leq n$, where $\ord_p$ denotes the $p$-adic valuation. Thus, the Newton polygon is a subset of $\R^2$ bounded below by a sequence of line segments of increasing slopes and on either side by vertical lines. (The Newton polygon of a monomial $ax^i$ is therefore a vertical ray extending from the point $(i,\ord_p a)$.)
By the \emph{length} of a segment of the Newton polygon, we mean the length of the segment upon projection to the $x$-axis.
 We say that $f$ is \emph{pure at $p$} if its Newton polygon has exactly one segment. Following \cite{Jak,DS}, we say that $f$ is \emph{$p^r$-pure} for an integer $r\geq 1$ if $f$ is pure, $\ord_p(a_0)=r$, and $\ord_p(a_n)=0$. 
 
  \begin{remark} \nf Note the distinction between a polynomial that is pure at $p$ (one segment, arbitrary slope) and $p$-pure (one segment, slope $\frac{-1}{\text{degree}}$). 
 \end{remark}

\begin{example} \nf The Newton polygon of $p+x^2+p^2x^3+p^3x^6$ has vertices at $(0,1)$, $(2,0)$, and $(6,3)$. It therefore has two segments of lengths 2 and 4 with slopes $-\frac{1}{2}$ and $\frac{3}{4}$, respectively. 

\vspace{-2mm}

\begin{figure}[h!]
\centering
\begin{tikzpicture}
\draw[{black}] plot coordinates {(0,1.7) (0,0) (3.2,0)};
\draw[] (0,0.5) -- (1,0);
\draw[] (1,0) -- (3,1.5);
\fill (0,0.5) circle (2pt) node[left] {\scriptsize\text{$1$}};
\fill (1,0) circle (2pt);
\fill (3,1.5) circle (2pt);
\draw (0.5, 2pt) -- (0.5,-2pt)node[anchor=north] {};
\draw (1, 2pt) -- (1,-2pt)node[anchor=north] {\scriptsize\text{$2$}};
\draw (1.5, 2pt) -- (1.5,-2pt)node[anchor=north] {};
\draw (2, 2pt) -- (2,-2pt)node[anchor=north] {};
\draw (2.5, 2pt) -- (2.5,-2pt)node[anchor=north] {};
\draw (3, 2pt) -- (3,-2pt)node[anchor=north] {\scriptsize\text{$6$}};
\draw (2pt,1) -- (-2pt,1) node[anchor=east] {};
\draw (2pt,1.5) -- (-2pt,1.5) node[anchor=east] {\scriptsize\text{$3$}};
    \end{tikzpicture}
\end{figure}
\end{example}

\vspace{-4mm}

\noindent We have the following fundamental property of Newton polygons. 

\begin{theorem}\label{NPtheorem}
\color{white}SPACE\color{black}
\begin{enumerate}
\item If  $\NP_p(f)$ consists of $N$ segments of lengths $\ell_1,\dots,\ell_N$ then there exist pure $f_1,\dots, f_N\in \Q_p[x]$ with $\deg(f_i)=\ell_i$ such that
$
f=\prod_{i=1}^Nf_i.
$
\item If $\NP_p(f)$ has a segment of length $\ell$ and slope $s$ then $f$ has exactly $\ell$ roots (in $\bar \Q_p$) of valuation $-s$. 
\end{enumerate} 
\end{theorem}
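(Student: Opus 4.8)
The plan is to deduce both statements from a single \emph{multiplicativity} property of Newton polygons: for any $f,g\in\Q_p[x]$ (indeed over any extension of $\Q_p$ to which $\ord_p$ extends as a valuation), the polygon $\NP_p(fg)$ is obtained by collecting all the segments of $\NP_p(f)$ together with all the segments of $\NP_p(g)$ and reassembling them in order of increasing slope; equivalently, the multiset of (slope, length) pairs of $\NP_p(fg)$ is the disjoint union of those of $\NP_p(f)$ and $\NP_p(g)$. Granting this, both parts follow by factoring $f$ into pieces whose polygons are easy to read off.

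To prove the multiplicativity property I would pass to the \emph{Legendre-type transform} of the polygon. Writing $f=\sum_i a_ix^i$, $g=\sum_j b_jx^j$, and $fg=\sum_k c_kx^k$ with $c_k=\sum_{i+j=k}a_ib_j$, set $\mu_f(\lambda)=\min_i(\ord_p a_i+\lambda i)$ for $\lambda\in\R$, and similarly for $g$ and $fg$. This piecewise-linear concave function records the lower hull of $\NP_p(f)$ (its breakpoints occur at the negatives of the segment slopes), so it suffices to show $\mu_{fg}=\mu_f+\mu_g$. The inequality $\mu_{fg}\ge\mu_f+\mu_g$ is immediate from $\ord_p c_k\ge\min_{i+j=k}(\ord_p a_i+\ord_p b_j)$. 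For the reverse inequality, fix $\lambda$ outside the finite set of breakpoints of $\mu_f$ and $\mu_g$; then the minima defining $\mu_f(\lambda)$ and $\mu_g(\lambda)$ are attained at unique indices $i_0$ and $j_0$, so $a_{i_0}b_{j_0}x^{i_0+j_0}$ is the \emph{strictly unique} minimal-valuation term contributing to $c_{i_0+j_0}$. No cancellation can occur, giving $\ord_p c_{i_0+j_0}=\ord_p a_{i_0}+\ord_p b_{j_0}$ and hence $\mu_{fg}(\lambda)\le\mu_f(\lambda)+\mu_g(\lambda)$. Equality then holds off a finite set, and by continuity everywhere. This no-cancellation step, together with the bookkeeping that translates $\mu_{fg}=\mu_f+\mu_g$ back into the geometric concatenation of polygons, is the technical heart of the argument.

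With multiplicativity in hand, part (2) is quick: extend $\ord_p$ to $\bar\Q_p$ and factor $f=a_n\prod_{j=1}^n(x-\alpha_j)$. The constant $a_n$ contributes only a vertical translation, and each linear factor $x-\alpha_j$ has Newton polygon a single segment from $(0,\ord_p\alpha_j)$ to $(1,0)$, of length $1$ and slope $-\ord_p\alpha_j$. Concatenating by slope, a segment of $\NP_p(f)$ of slope $s$ and length $\ell$ is assembled from exactly those linear factors with $-\ord_p\alpha_j=s$, i.e.\ from exactly $\ell$ roots of valuation $-s$.

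Finally, part (1) follows by Galois descent. For the $i$-th segment, of slope $s_i$ and length $\ell_i$, let $R_i=\{\alpha\in\bar\Q_p: f(\alpha)=0,\ \ord_p\alpha=-s_i\}$, so that $|R_i|=\ell_i$ by part (2). Because $\Gal(\bar\Q_p/\Q_p)$ preserves $\ord_p$, each $R_i$ is stable under the Galois action, and therefore $g_i(x)=\prod_{\alpha\in R_i}(x-\alpha)$ is Galois-invariant and lies in $\Q_p[x]$; it is monic of degree $\ell_i$. Then $f=a_n\prod_{i=1}^N g_i$, and absorbing the constant $a_n\in\Q_p$ into $g_1$ produces the desired factorization $f=\prod_{i=1}^N f_i$ with $f_i\in\Q_p[x]$ and $\deg f_i=\ell_i$.
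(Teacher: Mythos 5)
Your proof is correct, but it is worth noting that the paper does not actually prove this statement at all: it simply cites \cite[Proposition II.6.3]{Neukirch}, so any honest proof is ``a different route'' by default. Your argument is a clean, self-contained one. The standard proof in Neukirch works directly with the factorization of $f$ over a splitting field, orders the roots by valuation, and uses the expression of the coefficients $a_i$ as elementary symmetric functions of the roots to locate the vertices of $\NP_p(f)$; parts (1) and (2) then fall out together. You instead isolate the multiplicativity of Newton polygons as the key lemma and prove it via the Legendre-type transform $\mu_f(\lambda)=\min_i(\ord_p a_i+\lambda i)$, with the ultrametric no-cancellation argument at non-breakpoint values of $\lambda$ doing the real work. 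This buys you something the paper's citation does not: multiplicativity immediately yields Lemma \ref{concat} (which the paper states without proof as a ``consequence'' of Theorem \ref{NPtheorem}), and the transform formalism makes the ``concatenation by slope'' bookkeeping transparent, since breakpoints and derivative jumps of $\mu_f+\mu_g$ are visibly the unions of those of $\mu_f$ and $\mu_g$. Two small points of hygiene: in part (1) you should take $R_i$ to be the multiset of roots of valuation $-s_i$ (counted with multiplicity), since otherwise $f=a_n\prod_i g_i$ fails for inseparable... rather, non-squarefree $f$, and Galois-invariance of the resulting $g_i$ still holds because the action preserves multiplicities; and the whole statement tacitly assumes $f(0)\neq 0$ (as the paper does throughout), since otherwise the factor $x^{\ord_{x=0}f}$ contributes roots of infinite valuation belonging to no segment.
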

\begin{proof} See \cite[Proposition II.6.3]{Neukirch}.
\end{proof}

An immediate consequence of this theorem is that the Newton polygon of the product of two polynomials can be obtained by concatenating the segments of their respective Newton polygons in order of increasing slope. In the case of pure polynomials, this yields the following lemma. 

\begin{lemma} \label{concat} Suppose $f_1, \dots, f_n\in \Q_p[x]$ are pure of slopes $s_1\leq s_2\leq \cdots \leq s_n$. Then the Newton polygon of $f=\prod_{i=1}^n f_i$ consists of $n$ segments of slopes $s_1, s_2,\dots, s_n$. 
\end{lemma}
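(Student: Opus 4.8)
The plan is to reduce the lemma to the concatenation principle recorded immediately above the statement: for two polynomials, $\NP_p(f\cdot g)$ is obtained by juxtaposing the segments of $\NP_p(f)$ and $\NP_p(g)$ in order of increasing slope. I would then proceed by induction on $n$. The base case $n=1$ is immediate, since $f_1$ being pure of slope $s_1$ means precisely that $\NP_p(f_1)$ is a single segment of that slope.

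For the inductive step, suppose the claim holds for the product $h:=\prod_{i=1}^{n-1} f_i$, so that $\NP_p(h)$ consists of segments of slopes $s_1\leq \dots \leq s_{n-1}$, the steepest of which is $s_{n-1}$. Since $f_n$ is pure of slope $s_n\geq s_{n-1}$, its Newton polygon is a single segment whose slope is at least that of every segment of $\NP_p(h)$. Applying the two-factor concatenation principle to $f=h\cdot f_n$ then appends this single segment to the right-hand end of $\NP_p(h)$, yielding segments of slopes $s_1\leq \dots \leq s_{n-1}\leq s_n$, as required.

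As an alternative that bypasses the concatenation bookkeeping entirely, I could argue directly from the root-counting statement of Theorem \ref{NPtheorem}(2). Each pure $f_i$ of slope $s_i$ has all $\deg f_i$ of its roots (in $\bar \Q_p$) of valuation $-s_i$; since the roots of $f$ are the union with multiplicity of the roots of the $f_i$, the number of roots of $f$ of any valuation $-s$ equals $\sum_{i\,:\,s_i=s}\deg f_i$. Feeding this multiset of valuations back through part (2) of Theorem \ref{NPtheorem} recovers, for each distinct value $s$, a segment of $\NP_p(f)$ of slope $s$ and of the corresponding length, with the convex-hull structure automatically arranging them by increasing slope.

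The mathematical content here is entirely contained in Theorem \ref{NPtheorem}; the only point that genuinely requires care is the treatment of repeated slopes. When several of the $s_i$ coincide, the corresponding pieces fuse into a single geometric segment of the combined length, so the count of $n$ segments must be read with multiplicity in the slopes rather than as $n$ distinct edges of the polygon — this is exactly what the phrase ``not necessarily distinct'' in the statement is meant to capture. I would state this explicitly to avoid any ambiguity between ``segment'' as a maximal edge of $\NP_p(f)$ and ``segment'' as the contribution of a single factor.
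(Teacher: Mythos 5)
Your proposal is correct and matches the paper's (essentially omitted) argument: the paper simply records that concatenation of segments in order of increasing slope is an immediate consequence of Theorem \ref{NPtheorem} and lets the lemma follow, which is precisely your inductive two-factor argument, while your root-counting alternative is just the standard proof of that concatenation principle. Your explicit remark about merged segments when slopes repeat is a welcome clarification of the phrase ``not necessarily distinct,'' but it does not change the substance.
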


\begin{example}\nf The Newton polygon of $(x^2-p)(x^3-p)(x^4-p)$ consists of 3 segments of slopes $-1/2$, $-1/3$, and $-1/4$. The Newton polygon of $(x^2-p)^2$ consists of one segment of slope $-1/2$. 
\end{example}

\begin{corollary}\label{eisenpower} If $f$ is $p^r$-pure then $f^n$ is pure of slope $\frac{-r}{\deg f}$ for all $n\geq 1$. 
\end{corollary}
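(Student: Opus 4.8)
The plan is to deduce this directly from Lemma \ref{concat}. First I would record the observation that a $p^r$-pure polynomial is, in particular, pure of slope $-r/\deg f$: by definition its Newton polygon consists of a single segment, and since $\ord_p(a_0)=r$ and $\ord_p(a_n)=0$ where $n=\deg f$, that segment runs from $(0,r)$ to $(n,0)$ and hence has slope $\frac{0-r}{n-0}=\frac{-r}{\deg f}$. So being $p^r$-pure is a strictly stronger condition than being pure of slope $-r/\deg f$, and it is the latter, weaker fact that I will feed into the lemma.

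Next I would write $f^n=f\cdots f$ as a product of $n$ copies of $f$, each of which is pure of the common slope $s:=-r/\deg f$. Applying Lemma \ref{concat} with $f_1=\dots=f_n=f$ and $s_1=\dots=s_n=s$ shows that $\NP_p(f^n)$ consists of $n$ segments, all of slope $s$. The key point is then that segments of equal slope concatenate into one: a chain of collinear segments is a single segment (of total horizontal length $n\deg f$), so $\NP_p(f^n)$ has exactly one segment, of slope $s=-r/\deg f$. This is precisely the assertion that $f^n$ is pure of slope $-r/\deg f$.

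I do not anticipate a genuine obstacle, as the statement is an immediate corollary of Lemma \ref{concat}. The only point requiring (minor) care is the passage from ``$n$ segments of equal slope'' to ``a single segment''; that is, recognizing that the lemma's phrase ``not necessarily distinct'' is exactly what permits the segments to share a slope and thereby merge into one, yielding purity rather than merely $n$ parallel pieces.
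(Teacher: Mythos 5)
Your proposal is correct and takes exactly the paper's route: the paper's proof is the one-line ``This follows from Lemma \ref{concat},'' and you have simply spelled out the details (that $p^r$-pure implies pure of slope $-r/\deg f$, and that the $n$ equal-slope segments produced by the lemma merge into a single segment). No issues.
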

\begin{proof} This follows from Lemma \ref{concat}.
\end{proof}

We now show that the Newton polygon of a sum of polynomials is bounded below by the lower convex hull of the union of the constituent Newton polygons. In what follows, we write $\LCH(X)$ for the lower convex hull of a subset $X$ of $\R^2$. 

\begin{lemma}\label{sumunion} Let $f_1, \dots, f_n\in \Q_p[x]$. Then 
$$
\NP_p\bigg(\sum_{i=1}^n f_i\bigg)\subseteq \LCH\bigg(\bigcup_{i=1}^n\NP_p(f_i)\bigg).
$$
\end{lemma}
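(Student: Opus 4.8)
The plan is to reduce the statement to the coefficient-wise ultrametric inequality together with the fact that each $\NP_p(f_i)$, viewed as the region bounded below by its lower hull, is convex and closed upward in the vertical direction. Write $f_k=\sum_j a_{k,j}x^j$ and set $c_j=\sum_{k=1}^n a_{k,j}$, so that $\sum_{k=1}^n f_k=\sum_j c_j x^j$. By definition $\NP_p(\sum_k f_k)$ is the lower convex hull of the point set $Q=\{(j,\ord_p c_j)\}$, and I will write $R=\LCH(\bigcup_k \NP_p(f_k))$ for the region on the right-hand side. The key structural observation I would record first is that $R$ is convex and upward-closed (if $(x,y)\in R$ and $y'\ge y$ then $(x,y')\in R$), since it is built from a union of convex, upward-closed epigraphs by taking a convex, upward-closed hull. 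Consequently, to prove $\NP_p(\sum_k f_k)=\LCH(Q)\subseteq R$ it suffices to show that every point of $Q$ already lies in $R$: once $Q\subseteq R$, convexity gives $\mathrm{conv}(Q)\subseteq R$, and upward-closedness absorbs the rest of the lower hull, so $\LCH(Q)\subseteq R$.

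It then remains to place each generating point $(j,\ord_p c_j)$ inside $R$. Here I would invoke the ultrametric inequality coefficient by coefficient: $\ord_p c_j=\ord_p\big(\sum_k a_{k,j}\big)\ge \min_k \ord_p a_{k,j}$. Assuming $c_j\ne 0$ (otherwise there is no point to place), the minimum on the right is finite and is attained by some index $k_0$ with $a_{k_0,j}\ne 0$; in particular $0\le j\le \deg f_{k_0}$, so the point $(j,\ord_p a_{k_0,j})$ is one of the points whose lower convex hull defines $\NP_p(f_{k_0})$, and therefore lies in $\NP_p(f_{k_0})\subseteq R$. Since $\ord_p c_j\ge \ord_p a_{k_0,j}$ and $R$ is upward-closed, the point $(j,\ord_p c_j)$ lies in $R$ as well, which completes the argument.

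I expect the only real friction to be bookkeeping rather than mathematics: pinning down the precise meaning of $\LCH$ applied to a region (as opposed to a finite point set) and verifying the two soft properties I rely on, namely convexity and upward-closedness of $R$. Both follow directly once $\LCH(X)$ is taken to be the smallest convex, vertically upward-closed set containing $X$ (equivalently $\mathrm{conv}(X)+(\{0\}\times[0,\infty))$), so I would state this convention explicitly at the outset. The degenerate cases (vanishing coefficients, which correspond to points ``at infinity'' and are simply omitted from the defining point sets, and the possibility that some $c_j=0$) are handled by the remarks above and cause no trouble. If one prefers to avoid defining $\LCH$ on regions, the same argument can be run by induction on $n$, reducing to the two-summand case $\NP_p(f_1+f_2)\subseteq \LCH(\NP_p(f_1)\cup\NP_p(f_2))$, but the direct coefficient-wise argument seems cleanest.
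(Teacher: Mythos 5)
Your proof is correct and follows essentially the same route as the paper's: both rest on the coefficient-wise ultrametric inequality $\ord_p(\sum_k a_{k,j})\geq \min_k \ord_p a_{k,j}$ followed by monotonicity and convexity properties of the lower convex hull (which the paper isolates as Lemma~\ref{convexlemma} and proves by the same ``points lie in the hull, hence so do the segments joining them'' argument you use). The only cosmetic difference is that you treat all $n$ summands at once where the paper reduces to $n=2$ and inducts.
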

\begin{proof} It suffices to prove the case $n=2$, the rest follows by induction. Let $d$ be the maximum of the degrees of $f_1$ and $f_2$ write $f_1=\sum_{i=1}^d a_ix^i$ and $f_2=\sum_{i=1}^d b_ix^i$. Then Lemma \ref{convexlemma} below implies
\begin{align*}
\NP_p(f_1+f_2)&=\LCH\{\big(i,\ord_p(a_i+b_i)\big)\mid 0\leq i\leq N\}\\
&\subseteq \LCH\{\big(i,\min(\ord_p a_i,\ord_pb_i)\big)\mid 0\leq i\leq N\} \\
&\subseteq \LCH\big(\NP_p(f_1)\cup \NP_p(f_2)\big). 
\end{align*}
\end{proof}

\begin{example} \nf Let $f(x)=3+x^2+9x^3$ and $g(x)=9+x+3x^3$. In this case, 
$$
\NP_3(f+g)= \LCH\big(\NP_3(f)\cup \NP_3(g)\big),
$$
as seen in the following diagram, where the thin, dashed, and bold lines indicate $\NP_3(f)$, $\NP_3(g)$, and the lower convex hull of their union, respectively. 
 \begin{figure}[h!]
\centering
    \begin{tikzpicture}[scale=0.9] 
    \draw[{black}] plot coordinates {(0,2.5) (0,0) (3.5,0)};
   \draw[line width=0.5mm] (0,1) -- (1,0);
    \draw[line width=0.5mm] (1,0) -- (2,0);
    \draw[line width=0.5mm] (2,0) -- (3,1);
     \draw[] (0,1) -- (2,0);
     \draw[] (2,0) -- (3,2);
      \draw[dashed] (0,2) -- (1,0);
     \draw[dashed] (1,0) -- (3,1);
\fill (0,2) circle (2pt) node[left] {\scriptsize\text{$2$}};
\fill (0,1) circle (2pt) node[left] {\scriptsize\text{$1$}};
\fill (1,0) circle (2pt);
\fill (2,0) circle (2pt);
\fill (3,1) circle (2pt);
\fill (3,2) circle (2pt);
\draw (1, 2pt) -- (1,-2pt)node[anchor=north] {\scriptsize\text{$1$}};
\draw (2, 2pt) -- (2,-2pt)node[anchor=north] {\scriptsize\text{$2$}};
    \end{tikzpicture}
    \end{figure}
\end{example}

\vspace{-7mm}

\begin{lemma} \label{convexlemma} Let $x_1,\dots, x_n,y_1,\dots, y_n\in \R\cup \{\infty\}$, $z_i=\min(x_i,y_i)$, and suppose $X=\{(i,x_i)\mid 1\leq i \leq n\}$, $Y=\{(i,y_i)\mid 1\leq i \leq n\}$, and 
$Z=\{(i,z_i)\mid 1\leq i \leq n\}.$ Then the following hold:
\begin{enumerate}
\item If $x_i\geq y_i$ then $\LCH(X)\subseteq \LCH(Y)$. 
\item $\LCH(Z)\subseteq \LCH(X\cup Y)$. 
\end{enumerate}
\end{lemma}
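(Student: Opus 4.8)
The plan is to reduce both parts to a single elementary monotonicity property of the lower-convex-hull operation, after first pinning down the sense in which $\LCH(\cdot)$ is a subset of $\R^2$. I would work with the interpretation implicit in the definition of $\NP_p$: for a finite point set $S=\{(i,s_i)\}$, the region $\LCH(S)$ is the set of points lying on or above the lower boundary of $\mathrm{conv}(S)$, that is, the epigraph of the greatest convex function $\phi_S$ satisfying $\phi_S(i)\le s_i$ at every abscissa $i$ occurring in $S$. Heights equal to $\infty$ (the valuations of vanishing coefficients) impose no constraint and may simply be discarded. With this description, a containment $\LCH(S)\subseteq\LCH(T)$ is equivalent to the pointwise inequality $\phi_T\le\phi_S$ on the horizontal span of $S$, together with the (automatic, in both our cases) inclusion of that span into the span of $T$. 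The one fact I will use repeatedly is that $S\mapsto\phi_S$ is order-reversing: enlarging the point set, or lowering any of the heights, can only decrease the greatest convex minorant, and hence can only enlarge $\LCH(S)$.

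For part (2), I would first observe that $Z\subseteq X\cup Y$ as sets of points: for each $i$ the point $(i,z_i)$ equals either $(i,x_i)$ or $(i,y_i)$ according to which of $x_i,y_i$ is smaller, and is therefore a point of $X\cup Y$. Monotonicity of $\LCH$ under inclusion of point sets then gives $\LCH(Z)\subseteq\LCH(X\cup Y)$ immediately. Equivalently, and without invoking monotonicity as a black box, one checks that $\phi_{X\cup Y}(i)\le\min(x_i,y_i)=z_i$ because both $(i,x_i)$ and $(i,y_i)$ lie in $X\cup Y$; thus $\phi_{X\cup Y}$ is a convex minorant of $Z$, and since $\phi_Z$ is the greatest such minorant, $\phi_Z\ge\phi_{X\cup Y}$.

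For part (1) the sets $X$ and $Y$ are not nested, but the hypothesis $x_i\ge y_i$ does the work directly: since $\phi_Y(i)\le y_i\le x_i$ for every $i$, the convex function $\phi_Y$ is a convex minorant of the points of $X$, so by maximality of $\phi_X$ we get $\phi_X\ge\phi_Y$, whence $\LCH(X)\subseteq\LCH(Y)$. The spans behave correctly because $x_i$ finite forces $y_i$ finite, so every abscissa appearing in $X$ already appears in $Y$. I do not expect a genuine obstacle here: once the region-versus-curve meaning of $\LCH$ is fixed, the entire statement is just the observation that the greatest-convex-minorant operation reverses the two natural orderings on point configurations. The only points requiring care are bookkeeping ones---making ``subset of $\R^2$'' precise, and checking that the $\infty$ entries and the possibly unequal horizontal spans cause no trouble---which I would dispatch using the support-line description of $\phi_S$, whose graph is a chain of segments joining certain of the $(i,s_i)$ so that its value at any abscissa is a convex combination of two of the heights.
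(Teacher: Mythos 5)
Your proof is correct and takes essentially the same route as the paper's: in both parts the key step is that every point of the left-hand configuration lies in the right-hand region, and convexity --- phrased by you as maximality of the greatest convex minorant $\phi_S$, and by the paper as the fact that a convex region containing the points contains the segments joining them --- then yields the containment of hulls. The extra bookkeeping you supply for the $\infty$ entries and the horizontal spans is sound; the paper simply leaves it implicit.
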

\begin{proof}

1. By assumption, we have $(i,x_i)\in \LCH(Y)$ for all $i$. Convexity now implies that the line connecting any two points in $X$ is also contained in $\LCH(Y)$. In particular, the edges of $\LCH(X)$ are contained in $\LCH(Y)$ and the result follows. 

2. Clearly $(i,z_i)\in X\cup Y$, hence $(i,z_i)\in \LCH(X\cup Y)$ for all $i$. The result now follows from the same argument as (1). 
\end{proof}

\section{Main result.}\label{mainsec}

Fix a polynomial $f=\sum_{i=0}^n a_i x_i\in \Q_p[x]$ with $f(0)\neq 0$ and an integer $r\geq 1$. We begin by showing that certain pure Newton polygons are preserved under composition with $p^r$-pure polynomials. 
 
 \begin{proposition}\label{main1}  
If $f$ is pure of slope $s$ and $g\in \Z_p[x]$ is $p^r$-pure  with $r+s>0$, then $f\circ g$ is pure of slope $\frac{s}{\deg g}$. 
\end{proposition}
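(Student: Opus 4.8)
The plan is to write $f\circ g=\sum_{i=0}^{n} a_i\,g^i$ and treat this as a sum of polynomials whose Newton polygons we already understand, then invoke Lemma~\ref{sumunion}. Put $d=\deg g$ and $g=\sum_{j=0}^d b_j x^j$, so $\ord_p b_0=r$ and $\ord_p b_d=0$. Purity of $f$ of slope $s$ is exactly the family of inequalities $\ord_p a_i\geq \ord_p a_0+s\,i$ for all $i$, with equality at $i=n$. The hypothesis $|s|<r$ will enter only through its consequence $r+s>0$, which I use repeatedly.

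First I would pin down the two outer vertices of $\NP_p(f\circ g)$, whose $x$-range is $[0,dn]$. The leading coefficient is $a_n b_d^{\,n}$ (no cancellation, being the unique top-degree term), of valuation $\ord_p a_n=\ord_p a_0+s\,n$. The constant term is $f(g(0))=\sum_i a_i b_0^{\,i}$; since $\ord_p(a_i b_0^{\,i})\geq \ord_p a_0+i(r+s)$ with $r+s>0$, the $i=0$ term strictly dominates and this valuation is exactly $\ord_p a_0$. Hence the vertices are $(0,\ord_p a_0)$ and $(dn,\ord_p a_0+s\,n)$, and the chord joining them has slope $s/d$.

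Next, for the shape: by Corollary~\ref{eisenpower} (via Lemma~\ref{concat}) each $g^i$ is pure of slope $-r/d$, so $\NP_p(a_i g^i)$ is the single segment from $(0,\ord_p a_i+i r)$ to $(d i,\ord_p a_i)$. I claim each such segment lies on or above the line $\ell\colon y=\ord_p a_0+(s/d)x$. Along the segment the quantity $y-[\ord_p a_0+(s/d)x]$ is affine in $x$ with slope $-(r+s)/d<0$, hence is minimized at the right endpoint $x=di$, where it equals $\ord_p a_i-\ord_p a_0-s\,i\geq 0$ by purity of $f$. So the whole segment sits on or above $\ell$. Since both endpoints of $\ell$ over $[0,dn]$ lie in $\bigcup_i \NP_p(a_i g^i)$ — namely $(0,\ord_p a_0)$ from $i=0$ and $(dn,\ord_p a_0+s\,n)$ from $i=n$ — the lower convex hull of this union is exactly the segment $\ell$ restricted to $[0,dn]$.

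Finally I combine the pieces: Lemma~\ref{sumunion} gives $\NP_p(f\circ g)\subseteq \LCH\big(\bigcup_i \NP_p(a_i g^i)\big)=\ell|_{[0,dn]}$, so the Newton polygon is contained in a single line segment. As its extreme vertices are precisely the two endpoints of that segment (computed in the second step), and a Newton polygon is a connected convex chain joining its endpoints, it must coincide with $\ell|_{[0,dn]}$, a single segment of slope $s/d$. I expect the one place that genuinely decides the argument to be the reduction in the third step: one must verify that each affine difference is extremized at the \emph{right-hand} endpoint (which is exactly what $r+s>0$ guarantees), so that the single purity inequality $\ord_p a_i\geq \ord_p a_0+s\,i$ controls the entire segment rather than just its endpoint.
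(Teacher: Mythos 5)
Your proof is correct and follows essentially the same route as the paper's: decompose $f\circ g=\sum_i a_ig^i$, use Corollary~\ref{eisenpower} to see that each $\NP_p(a_ig^i)$ is a segment of slope $-r/d$ lying on or above the target chord of slope $s/d$, apply Lemma~\ref{sumunion}, and check the two endpoint valuations separately (with $|s|<r$ forcing the constant term of $f$ to dominate in $f(g(0))$). The only cosmetic difference is that you compare each segment to the chord at its right endpoint, where the purity inequality for $f$ applies directly, whereas the paper anchors its bounding lines $L_i$ at their left endpoints; the underlying estimate is identical.
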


\begin{proof} Since multiplication by nonzero constants only shifts the Newton polygon vertically, we may assume that $f$ is monic. Write
$$
f(g(x))=c_0+c_1x+\cdots +c_{dn}x^{dn},
$$ 
where $d=\deg g$ and $n=\deg f$. Let  $m=\ord_pa_0$, so that $s=-\frac{m}{n}$ (possibly not in lowest terms).  
By assumption, the Newton polygon of $f$ consists of a single edge of slope $s$ connecting the two vertices $(0,m)$ and $(n,0)$, i.e., we have that:

\vspace{0.2cm}

\noindent (V$f$) $\ord_p a_0=m$ and $\ord_p a_{n}=0$,

\noindent (E$f$) $\ord_p(a_i)\geq m+si$ for all $1\leq i\leq n$.

\vspace{0.2cm}

We must prove that $\NP_p(f\circ g)$ consists of one edge of slope $s/d$ connecting the two vertices $(0,m)$ and $(dn,0)$, i.e., we need to show that:

\vspace{0.2cm}

\noindent (V$f\circ g$) $\ord_p(c_0)=m$ and $\ord_p(c_{dn})=0$,

\noindent(E$f\circ g$) $\ord_p(c_i)\geq m+\frac{s}{d}i$ for all $1\leq i\leq dn$.  

\vspace{0.2cm}

Let $b_i\in \Z_p$ denote the coefficients of $g$. To show (V$f\circ g$), note that (E$f$) and $r+s>0$ imply 
\begin{align*}
\ord_p\sum_{i=1}^{n}  a_ib_0^i\geq \min_{1\leq i\leq n}\bigg(\ord_pa_i+ir\bigg)\geq \min_{1\leq i\leq n}\bigg(m+i(r+s)\bigg)>m,
\end{align*}
therefore
\begin{align*}
\ord_p(c_0)=\ord_p\bigg(\sum_{i=0}^{n}  a_ib_0^i\bigg)
=\min\bigg(\ord_pa_0, \ord_p\sum_{i=1}^{n}  a_ib_0^i\bigg)
=m.
\end{align*}
It is clear that
$
\ord_p(c_{dn})=\ord_p(a_nb_d^n)=\ord_p(a_n)+n\ord_p(b_d)=0.
$

It remains to prove (E$f\circ g$), for which it suffices to show that the points $(i,\ord_p c_i)$ lie on or above the line connecting $(0,m)$ and $(nd,0)$, i.e., that
$$
(i,\ord_p c_i)\in \LCH\{(0,m), (nd,0)\}. 
$$
First, note that (E$f$) implies $\ord_p(a_ig(0)^i)=\ord_pa_i+ir\geq m+i(r+s)$
so the leftmost vertex  of $\NP_p(a_ig^i)$ lies on or above the point $\big(0,m+i(r+s)\big)$. But Corollary \ref{eisenpower} implies that the Newton polygon of each $a_ig^i$ is pure of slope $-\frac{r}{d}$, hence the lower edge of  $\NP_p(a_ig^i)$ lies entirely on or above the line segment $L_i$ through the points $\big(0,m+i(r+s)\big)$ and $(id, m+is)$.  Thus 
\begin{equation}\label{lbL}
\NP_p(a_ig^i)\subseteq \LCH(L_i)
\end{equation}
for all $0\leq i\leq n$. (Note that $L_0$ is simply the point $(0,m)$.) Since  $f\circ g=\sum_{i=0}^n a_i g^i$, it now follows from Lemma \ref{sumunion} (see also Figure \ref{fig2}) that
\newpage
\begin{align*}
(i,\ord_p c_i)&\in \NP_p(f\circ g)\\
&=\NP_p\bigg(\sum_{i=0}^n a_i g^i\bigg)\\
&\subseteq  \LCH\bigg(\bigcup_{i=0}^n \LCH(L_i)\bigg)\\
&=\LCH\{(0,m),(nd,0)\}.
\end{align*}
\end{proof}

\begin{figure}[h!] 
    \begin{tikzpicture}[scale=0.8] 
    \draw[{black}] plot coordinates {(0,8.5) (0,0) (10.5,0)};
    \newcommand{\pathone}{(0,8) -- (10,0)}
    \draw[thick] \pathone;
    \fill (0,8) circle (2pt) node[left] {\scriptsize $nr$};
    \fill (10,0) circle (2pt) node[below] {\scriptsize $nd$};
    \fill (0,2) circle (2pt) node[left] {\scriptsize $m$};
    \fill (0,6) circle (2pt) node[left] {\scriptsize $m+4(r+s)$};
    \fill (0,5) circle (2pt) node[left] {\scriptsize $m+3(r+s)$};
    \fill (0,4) circle (2pt) node[left] {\scriptsize $m+2(r+s)$};
    \fill (0,3) circle (2pt) node[left] {\scriptsize $m+r+s$};
    \newcommand{\pathtwo}{(0,2) -- (10,0)}
    \draw[thick, dashed] \pathtwo;
    \draw[thick] (0,6) -- (6.667,0.667);
    \draw[thick] (0,5) -- (5,1);
    \draw[thick] (0,4) -- (3.333,1.333);
    \draw[thick] (0,3) -- (1.667,1.667);
    \fill (6.667,0.667) circle (2pt); 
    \fill (5,1) circle (2pt);
    \fill (3.333,1.333) circle (2pt);
    \fill (1.667,1.667) circle (2pt);
\draw (1.667, 2pt) -- (1.667,-2pt)node[anchor=north] {\scriptsize $d$};
\draw (3.333, 2pt) -- (3.333,-2pt)node[anchor=north] {\scriptsize $2d$};
\draw (5, 2pt) -- (5,-2pt)node[anchor=north] {\scriptsize $3d$};
\draw (6.667, 2pt) -- (6.667,-2pt)node[anchor=north] {\scriptsize $4d$};
    \node[align=center,rotate=90] at (-0.3,7.1) {$\mathbf{\cdots}$};
    \node[align=center] at (8.5,-0.35) {$\mathbf{\cdots}$};
    \node[align=center,rotate=45] at (4,3.8) {$\mathbf{\cdots}$};
    \end{tikzpicture}
    \caption{In bold are the lines $L_i$, which serve as lower bounds (in the sense of \eqref{lbL}) for the Newton polygons of $a_ig^i$. The dashed line is the lower convex hull of all the $L_i$, which is exactly the Newton polygon of $f\circ g$.} 
    \label{fig2}
    \end{figure}
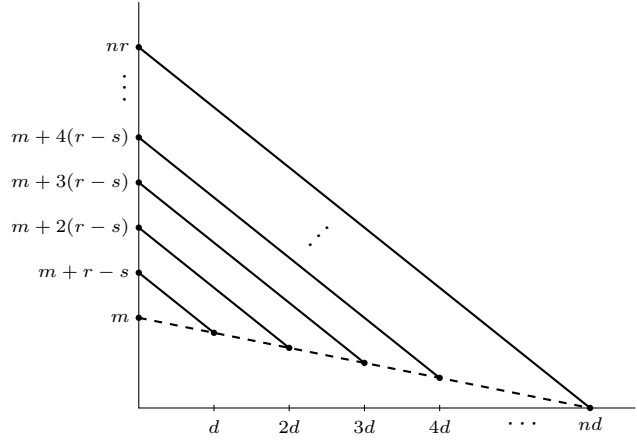

\begin{remark}\label{prcomp} \nf Note that Proposition \ref{main1} implies that if $f$ and $g$ are both $p^r$-pure with $\deg f>1$ then $f\circ g$ is also $p^r$-pure. This result therefore recovers \cite[Theorem 3.10]{DS}, however the proof presented here is different.
 \end{remark}

\begin{remark} \nf Both the $p$-integrality and negative slope of $p^r$-pure polynomials are necessary for Proposition \ref{main1} to hold. For example, one can check that the conclusion of the proposition fails if:
\begin{itemize}
\item $f(x)=g(x)=x^2+\frac{1}{p}$ 
\item $f(x)=p+x^2$ and $g(x)=1+px^2$.
\end{itemize}
Note that in both these examples, $g$ is pure but not $p$-pure.
\end{remark}

We now prove our main theorem.  

\noindent \it Proof of Theorem \ref{main}. \nf 
By Theorem \ref{NPtheorem}, we can factor $f=\prod_{i=1}^N f_i$ so that each $f_i$ is pure of slope $s_i$. By Proposition \ref{main1}, $f_i\circ g$ is pure of slope $\frac{s_i}{\deg g}$. It now follows from Lemma \ref{concat} that the Newton polygon of 
$$
f\circ g=\prod_{i=1}^N f_i\circ g
$$ 
has the desired shape. 
\hfill $\square$

\begin{remark}\nf An earlier version of this article contained a significantly longer inductive proof of this theorem. The `easy' proof presented above was pointed out to us by James Upton, whom we heartily thank. 
\end{remark}

\begin{remark}\nf The assumption on the slopes of $f$ is necessary for Theorem \ref{main}. For example, the conclusion of the theorem is false for $f(x)=p^2+x+p^2x^2$  (two segments of slopes $-2$ and $2$) and the Eisenstein ($p$-pure) polynomial $g(x)=p+x^2$. 
\end{remark}

\begin{corollary}
There are infinitely many irreducible polynomials $g\in \Z_p[x]$ such that the following hold:
\begin{enumerate}
\item The Newton polygon of $f\circ g$ has the same number of segments as the Newton polygon of $f$.
\item For any $\e>0$, all roots (in $\bar \Q_p$) of $f\circ g$ have valuation $<\e$.
\end{enumerate}
\end{corollary}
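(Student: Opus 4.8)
The plan is to build the whole family from a single well-chosen purity exponent and then drive the degree to infinity. Since $f$ is fixed, $\NP_p(f)$ has only finitely many slopes $s_1<\dots<s_N$, so I begin by fixing an integer $r>\max_{1\le i\le N}|s_i|$. The point of this choice is that the slope hypothesis $|s_i|<r$ of Theorem \ref{main2} then holds automatically for \emph{every} $p^r$-pure polynomial, which decouples the slope condition from the degree of $g$.

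For the family itself, for each integer $d\ge 1$ with $\gcd(r,d)=1$ I would take $g_d(x)=x^d+p^r\in\Z_p[x]$. Its Newton polygon is the single segment joining $(0,r)$ and $(d,0)$, so $g_d$ is $p^r$-pure; and since $\gcd(r,d)=1$, its common root-valuation $r/d$ has exact denominator $d$, which forces $g_d$ to be irreducible over $\Q_p$ (the local form of the Eisenstein--Dumas criterion). Letting $d$ range over the primes exceeding $r$, say, produces infinitely many admissible $d$, hence infinitely many distinct irreducible $g_d\in\Z_p[x]$. Applying Theorem \ref{main2} to $f$ and $g=g_d$---legitimate precisely because $r$ was chosen so that $|s_i|<r$---shows that $\NP_p(f\circ g_d)$ has exactly $N$ segments, of slopes $s_i/d$; this is property (1), and it holds for every admissible $d$. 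For property (2), Theorem \ref{NPtheorem}(2) identifies the valuations of the roots of $f\circ g_d$ in $\bar\Q_p$ as the numbers $-s_i/d$. Since $|s_i/d|\le(\max_i|s_i|)/d$, these valuations tend to $0$ as $d\to\infty$, so for any prescribed $\e>0$ every $g_d$ with $d>(\max_i|s_i|)/\e$ has all roots of $f\circ g_d$ of valuation $<\e$; infinitely many such $d$ remain, giving the claim.

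The only point requiring care is the quantifier structure in (2): a single $g_d$ cannot have all of its root valuations below \emph{every} $\e$, since the valuations $-s_i/d$ are fixed once $d$ is, and $-s_i/d$ is positive whenever $s_i<0$. Thus (2) should be read as asserting that for each $\e>0$ infinitely many members of the family satisfy the bound, which is exactly what the estimate $|s_i/d|\to 0$ delivers. Beyond this bookkeeping, the argument is a direct assembly of Theorem \ref{main2}, Theorem \ref{NPtheorem}(2), and the irreducibility of $p^r$-Dumas polynomials, so no genuinely difficult step arises.
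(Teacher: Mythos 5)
Your proof is correct and follows essentially the same route as the paper's: both fix $r$ exceeding $\max_i|s_i|$, take $g(x)=x^d+p^r$ with $\gcd(r,d)=1$ (hence $p^r$-Dumas, so irreducible and $p^r$-pure), and then invoke Theorem \ref{main2} for (1) and Theorem \ref{NPtheorem}(2) together with $|s_i|/d\to 0$ for (2). Your remark on the quantifier order in (2) matches the reading the paper's own proof adopts (choosing $\e$ first and then $d$), so there is no gap.
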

\begin{proof} Let $s_1, \dots, s_N$ be the slopes of the Newton polygon of $f$. Let $\e>0$ and choose any of the infinitely many pairs of integers $r$ and $d$ such that  $\gcd(r,d)=1$, and both $r+s_i>0$ and $\frac{s_i}{d}<\e$ hold for all $i$. Then $g(x)=x^d+p^r$ is $p^r$-Dumas (i.e., it is irreducible and $p^r$-pure) and statements 1 and 2 now follow directly from Theorem \ref{main} and Theorem \ref{NPtheorem}. 
\end{proof}

\section{Sample application.}

Recall that a polynomial $g\in \Q[x]$ is called \emph{dynamically irreducible} if $g^{\circ n}$ is irreducible for all $n\geq 0$. We give a mild generalization of this notion in the following definition. 

\begin{definition}\label{dsg} Let $f,g\in \Q[x]$. We say that $g$ is \emph{dynamically irreducible at $f$} if $f\circ g^{\circ n}$ is irreducible for all $n\geq 0$. 
\end{definition}

\noindent  Let
$$
f_n(x)=1+x+\frac{x^2}{2!}+\cdots+\frac{x^n}{n!}\in \Q[x]
$$
denote the $n$th Taylor polynomial of the exponential function. It is well known that $f_n$ is irreducible for all $n$ (see \cite{Schur, Coleman}).

\begin{proposition}\label{irredcoro} Fix $n\geq 1$ and suppose $g\in \Q[x]$ is $p^r$-Dumas at all primes $p\mid n$. If $d=\deg(g)$ is prime and $d$ is strictly larger than all prime divisors of $n$, then $g$ is dynamically irreducible at $f_n$.
\end{proposition}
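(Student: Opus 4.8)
The plan is to reduce the statement to a single irreducibility over the number field $K=\Q(\alpha)$ generated by a root $\alpha$ of $f_n$, and then to read off that irreducibility from a Newton polygon produced by the stretching theorem. Since $f_n$ is irreducible over $\Q$ (by \cite{Schur,Coleman}), for any root $\theta$ of $f_n\circ g^{\circ m}$ the element $g^{\circ m}(\theta)$ is a root of $f_n$, so $K\cong\Q(g^{\circ m}(\theta))$ sits inside $\Q(\theta)$ and $n\mid[\Q(\theta):\Q]=\deg F$ for every irreducible factor $F$. Tracking the tower $\Q\subseteq K\subseteq\Q(\theta)$ shows that $f_n\circ g^{\circ m}$ is irreducible over $\Q$ if and only if $g^{\circ m}(x)-\alpha$ is irreducible over $K$; this is the statement I would aim to prove. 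I would first record two supporting facts. By Corollary \ref{eisenpower} and Proposition \ref{main1}, purity is preserved under iteration (since $|-r/d^{k}|<r$), so $g^{\circ m}$ is $p^r$-pure at every prime $p\mid n$, for every $m$. And Legendre's formula $\ord_p(i!)=\frac{i-s_p(i)}{p-1}$ shows every slope of $\NP_p(f_n)$ has absolute value $<\frac{1}{p-1}\le 1\le r$; in particular the steepness hypothesis $|s_i|<r$ of Theorem \ref{main2} holds automatically at each $p\mid n$, and every root of $f_n$ has $p$-adic valuation $<1\le r$.

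The core local step is then as follows. Fix a prime $p\mid n$ and a prime $\mathfrak p$ of $K$ over it, with normalized valuation $w_{\mathfrak p}$. Because $g^{\circ m}$ is $p^r$-pure its interior coefficients lie on or above the line of slope $-r/d^m$, and because $\ord_p(\alpha)<r$ the constant term $g^{\circ m}(0)-\alpha$ has valuation $w_{\mathfrak p}(\alpha)$; a short computation then shows that $\NP_{\mathfrak p}\big(g^{\circ m}(x)-\alpha\big)$ is a single segment from $(0,w_{\mathfrak p}(\alpha))$ to $(d^m,0)$ of slope $-w_{\mathfrak p}(\alpha)/d^m$. If $d\nmid w_{\mathfrak p}(\alpha)$ this slope is in lowest terms with denominator $d^m=\deg\big(g^{\circ m}(x)-\alpha\big)$, so the Eisenstein--Dumas criterion makes $g^{\circ m}(x)-\alpha$ irreducible over $K_{\mathfrak p}$, hence over $K$, which finishes the proof.

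Everything therefore comes down to producing a prime at which $d\nmid w_{\mathfrak p}(\alpha)$, and I expect this to be the main obstacle. The hypotheses on $d$ enter through $\gcd(d,n)=1$. Via the product formula $\sum_{\mathfrak p\mid p}f_{\mathfrak p}\,w_{\mathfrak p}(\alpha)=\ord_p\big(N_{K/\Q}\alpha\big)=\ord_p(n!)$, it suffices to find a prime $p\mid n$ with $d\nmid\ord_p(n!)=\frac{n-s_p(n)}{p-1}$, which I would attack by analyzing this quantity modulo $d$ through the base-$p$ digits of $n$. Equivalently, and more in the spirit of the stretching theorem, one can seek $p\mid n$ for which every slope of $\NP_p(f_n)$ has numerator coprime to $d$: then by Theorem \ref{main2} every slope of $\NP_p(f_n\circ g^{\circ m})$ has denominator divisible by $d^m$, so every $\Q_p$-irreducible factor, and hence any $F$ over $\Q$, has degree divisible by $d^m$; combined with $n\mid\deg F$ and $\gcd(n,d^m)=1$ this forces $\deg F\in\{0,nd^m\}$. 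The delicate point, which I expect to require the most care, is that for some $n$ (for instance prime powers whose single slope numerator is itself divisible by $d$) no prime dividing $n$ has this property, and one is then forced to exhibit an auxiliary prime $\mathfrak q$ over a rational prime $q\nmid n$ at which $g^{\circ m}(x)-\alpha$ is Eisenstein--Dumas. Controlling its Newton polygon there reduces to showing that a suitable norm of the constant term $g^{\circ m}(0)-\alpha$ (an element of the forward orbit of $0$ under $g$) is not a perfect $d$-th power, and it is here that the genuine content of the proposition lies.
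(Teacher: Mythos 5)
Your proposal does not close: it reduces the statement (by two different routes) to an arithmetic fact that you then explicitly decline to prove, and that fact is exactly where a proof is required. Concretely, both your relative approach (Eisenstein--Dumas for $g^{\circ m}(x)-\alpha$ over $K_{\mathfrak p}$, which needs a prime with $d\nmid w_{\mathfrak p}(\alpha)$) and the direct approach you sketch afterwards (which needs a prime $p\mid n$ at which every slope of $\NP_p(f_n)$ has numerator coprime to $d$) require producing a prime that, as you yourself observe, need not exist among the primes dividing $n$: for $n=27$ and $d=13$ the unique slope of $\NP_3(f_{27})$ is $-13/27$ and $\ord_3(27!)=13$, so neither condition holds at $p=3$, and there is no other prime dividing $n$ to try. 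Your final sentence defers such cases to an unspecified auxiliary prime $q\nmid n$ and an unproved claim that a certain norm from the forward orbit of $0$ is not a $d$-th power; that is a statement of where the difficulty lies, not an argument, so as written the proposal is a well-organized reduction rather than a proof.

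For comparison, the paper's proof is precisely the direct route from your second half: by \cite[Lemma \S II]{Coleman} the slopes of $\NP_p(f_n)$ are $-\frac{p^{n_i}-1}{p^{n_i}(p-1)}$, Theorem \ref{main2} divides these by $d^m$, and \cite[Corollary \S I]{Coleman} is invoked to conclude that $d^mp^{\ord_p n}$ divides the degree of every irreducible factor, whence $d^mn$ divides that degree and the composition is irreducible. The paper reads ``the denominator of each slope'' off the unreduced fraction $d^mp^{n_i}(p-1)$ and does not discuss whether $d$ divides the numerator $(p^{n_i}-1)/(p-1)$, which is what reduction to lowest terms --- and hence the applicability of Coleman's corollary, a statement about denominators of slopes in lowest terms --- turns on. So the coprimality issue you isolate is genuine and is passed over in the paper's own argument; but identifying the obstruction is not the same as resolving it, and your write-up resolves it in neither approach.
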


\begin{proof} Choose a prime $p\mid n$ and fix $m\geq 0$. We must show that $f_n \circ g^{\circ m}$ is irreducible over $\Q$.
Writing $n=\sum_{i=0}^N b_ip^{n_i}$ with $n_1<n_2<\cdots<n_N$ and $0<b_i<p$, we know from \cite[Lemma \S II]{Coleman} that $\NP_p(f_n)$ has $N$ segments of slopes 
$$
-\frac{(p^{n_i}-1)}{p^{n_i}(p-1)}. 
$$
From Proposition \ref{main1} (see also Remark \ref{prcomp}), we know that $g^{\circ m}$ is $p^r$-pure, thus Theorem \ref{main} implies that $\NP_p(f_n\circ g^{\circ m})$ has $N$ segments of  slopes 
$$
-\frac{(p^{n_i}-1)}{d^mp^{n_i}(p-1)}.
$$
In particular, $d^mp^{\ord_pn}=d^mp^{n_1}$ divides the denominator of each slope of the Newton polygon $\NP_p(f_n\circ g^{\circ m})$, thus by \cite[Corollary \S I]{Coleman}, $d^mp^{\ord_pn}$ divides the degree of any irreducible factor of $f_n\circ g^{\circ m}$ over $\Q$. 
It follows that any irreducible factor of $f_n\circ g^{\circ m}$ has degree $d^mn$, hence the result. 
\end{proof}

\begin{remark}\nf  It is easy to construct polynomials $g$ of any prime degree satisfying the hypotheses of Proposition \ref{irredcoro}. For example, if $r$ and $d$ are distinct primes and $\alpha_n=\prod_{\ord_p (n)>0} p$ then
$$
 g(x)=x^d+\alpha_n^r,
$$ 
 is $p^r$-Dumas at all primes $p\mid n$. 
\end{remark}


\end{document}